\newtheorem{theorem}{Theorem}[section]
\newtheorem{proposition}{Proposition}[section]
\newtheorem{definition}{Definition}[section]
\newtheorem{remark}{Remark}
\newtheorem{example}{Example}
\def\<{\langle}
\def\>{\rangle}
\begin{document}
\title[Periodic symbols]{Pseudodifferential operators with completely periodic symbols}
\author[G. Garello]{Gianluca Garello}
\address{Mathematics Department, University of Turin, via Carlo Alberto 10, I-10123 Torino, Italy. (ORCID: 0000-0002-8636-2998 )}
\email{gianluca.garello@unito.it}
\author[A. Morando]{Alessandro Morando}
\address{DICATAM, University of Brescia, via Valotti 9, I-25133 Brescia, Italy. (ORCID: 0000-0002-6800-7490)}
\email{alessandro.morando@unibs.it}
%\date{}

\begin{abstract}
Motivated by the recent paper of Boggiatto-Garello in J. Pseudo-Differ. Oper. Appl. \textbf{11} (2020), 93-117, where a Gabor operator is regarded as pseudodifferential operator with symbol $p(x,\omega)$ periodic on both the variables, we study the continuity and invertibility, on general time frequency invariant spaces, of pseudodifferential operators with completely periodic symbol and general $\tau$ quantization.
\end{abstract}
\maketitle
\textbf{Keywords.} Gabor Frames, Modulation Spaces, Periodic Distributions, Pseudodifferential Operators .\\
\textbf{MSC2020.} 35S05, 42C15, 47B90.

\tableofcontents
%====Introduction
\section{Introduction.}\label{INT}
Consider the formal expression of the Gabor operator: $Sf=\sum\limits_{h,k\in \mathbb Z^d} (f, g_{h,k})_{_{L^2}}g_{h,k}$,  $g_{h,h}(t)=e^{2\pi i\beta k\cdot t}g(t-\alpha h)$, $\alpha, \beta\in\mathbb R_+$, and that of the pseudodifferential operator with Kohn-Nirenberg quantization: $ \displaystyle {a(x,D) \varphi(x)=  \iint e^{2\pi i \omega\cdot(x-t)}a(x,\omega) f(t)\, dt\, d\omega}$, where $f \in \mathcal S(\mathbb R^d)$, $a(x,\omega)\in \mathcal S'(\mathbb R^{2d})$ and the integration is intended in distribution sense.  In the recent work \cite{BogGar2020} it is proven that  $S=a(x,D)$, where
\begin{equation}\label{INT01}
 a(x,\omega)=\sum_{h,k \in \mathbb Z^d} e^{-2\pi i (x-\alpha h)\cdot (\omega-\beta k)}g(x-\alpha h)\bar{\hat g} (\omega-\beta k),
\end{equation}
with suitable decay conditions at infinitive of $g$, $\hat g$, and convergence in $L^\infty(\mathbb R^{2d})$. Then using the Calder\'on - Vaillancourt Theorem about $L^2$ continuity of pseudodifferential operators, see \cite{HW1}, one can prove that for $\alpha+\beta$ less than a suitable positive constant $C_g$, depending only on the decay at infinitive of $g, \hat g$ and some of their derivatives, the Gabor operator is invertible as a bounded linear operator on $L^2(\mathbb R^d)$. Then as well known in frame theory, the Gabor system $\mathcal \{g_{h,k}\}_{h,k\in \mathbb Z^d}$ realizes to be a frame.\\
The literature about Gabor frame theory is wide, we quote here only the  monographies \cite{Coh95-1}, \cite{GRO1}, \cite{Hei11}, \cite{Chr16}. Among others, the problem of finding conditions on  the parameters $\alpha, \beta$ in order to obtain Gabor frames is a challenging one, see for example \cite{Dai-Sun16}, \cite{GroKop19} and the references therein.\\
Notice now that the symbol in \eqref{INT01} is completely periodic, with period $\alpha$ with respect to the spatial variable $x$ and period $\beta$ with respect to $\omega$. For all the reasons listed above, we think  it should be of  some relevance to develop the study of pseudodifferential operators with completely periodic symbols, their continuity and possible invertibility in $L^2$ or more general function spaces.\\
Concerning symbols independent of $x$, that is Fourier multipliers, we quote the papers \cite{deL65}, \cite{Iga69}, where the periodic case in considered.\\
Wider is the literature concerning the pseudodifferential operators on compact Lie groups, see the  fundamental book of Ruzhansky-Turunen \cite{RuzTur}, which have as particular case symbols periodic in $x$ and discrete (non periodic) in $\omega$. Also interesting is the reversed case where the symbols are discrete in $x$ and periodic in $\omega$; the related operators are called in this case "pseudo-difference" operators, see  \cite{BoKiRu20}, \cite{KumMon2023}.
About pseudodifferential operators on generalized spaces, e.g. modulation spaces, we refer to the following papers \cite{Tof04-1}, \cite{Tof04-2}, \cite{CorNic10}, \cite{BogDedOli2010}, \cite{PTT2}, \cite{PTT1}, \cite{CaTo17}, \cite{DeTra18}, \cite{CorDeTra19}, \cite{CorNicTra19}.\\
The plan of the paper is the following: in \S \ref{SBT} we give the notations and definitions; then we review some basic facts about periodic distributions with respect to a general invertible matrix and their Fourier transform. In \S \ref{SPDO} we introduce the pseudodifferential operators with general $\tau$ quantization and for the case of periodic symbols  we provide a representation formula, obtained by linear combination of time frequency shift operators. 
At the end, respectively in \S \ref{CONT} and \S \ref{INVERT} we set the results of continuity and invertibility on general families of time frequency shift invariant spaces.
The Appendix \ref{PER-DIST} is devoted to give some technicalities in order to compare periodic distributions on $\mathbb R^n$ and distribuitions on the $n$ dimensional torus $\mathbb T^n$.\\

\section{Preliminaries}\label{SBT}
\subsection{Notations and basic tools}
In whole the paper we will use the following notations and tools:
\begin{itemize}
\item $\mathbb R^n_0=\mathbb R^n\setminus\{0\}$, $\mathbb Z^n_0=\mathbb Z^n\setminus\{0\}$;
\smallskip

\item $\langle x\rangle=\sqrt{1+\vert x\vert^2}$, where $\vert x\vert$ is the Euclidean norm of $x\in \mathbb R^n$;
\item $x\cdot \omega=\langle x,\omega\rangle=\sum_{j=1}^n x_j\omega_j$; \quad $x,\omega\in \mathbb R^n$;
\item $(f,g)=\int f(x)\bar g(x)\, dx$ is the inner product in $ L^2(\mathbb R^n)$;
%\item $a x=(a_1 x_1, \dots, a_n x_n)$;
%\item $\Pi a=\Pi_{j=1}^n a_j$;  
\item $\mathcal F f(\omega)=\hat f(\omega)=\int f(x) e^{-2\pi i x\cdot \omega} \, dx$  the Fourier transform of $f\in \mathcal S(\mathbb R^n)$, with the well known extension to $u\in \mathcal S'(\mathbb R^n)$.
\end{itemize}
 The {\em polynomial weight function} $v$ is defined  for some $s\ge 0$ by
\begin{equation}\label{pol}
v(z)=(1+\vert z\vert^2)^{s/2}\,,\quad\forall\,z\in\mathbb R^n\,.
\end{equation}
A non negative measurable function $m=m(z)$ on $\mathbb R^n$ is said to be a \textit{polynomially moderate} (or \textit{temperate}) weight function if there exists a positive constant $C$ such that
\begin{equation}\label{SOPS1}
m(z_1+z_2)\leq C v(z_1)m(z_2) \quad \text {for all}\, z_1, z_2\in \mathbb R^n.
\end{equation}

For other details about weight functions see \cite[\S 11.1]{GRO1}.

In the following we will use in many cases the matrix in $GL(2d)$ 
\begin{equation}\label{SYMPMAT}
\mathcal J=\left(
\begin{array}{cc}
0&-I\\
I&0
\end{array}
\right),
\end{equation} 
which defines the \textit{symplectic} form, see \cite[\S 9.4]{GRO1},
\begin{equation}\label{SYMP}
[z_1, z_2]:= \langle z_1, \mathcal J z_2\rangle= x_2\cdot\omega _1-x_1\cdot\omega_2\quad , \quad z_1=(x_1, \omega_1),\, z_2=(x_2, \omega_2) \in \mathbb R^{2d}. 
\end{equation} 

% time frequency shifts
\subsection{Time frequency shifts (tfs)}\label{SubTF}
For $z=(x,\omega)\in \mathbb R^{2d} $ we define the operators:
\begin{align}
&T_x f(t)=f(t-x)  &\text{(translation)};\label{TR}\\
&M_\omega f(t)=e^{2\pi i \omega \cdot t} f(t) &\text{(modulation)},\label{MO}\\
&\pi_z f=M_\omega T_x f  &\text{(time frequency shift)},\label{TFS}
\end{align}
with suitable extension to distributions in $\mathcal D'(\mathbb R^{d})$.\\
For $u\in\mathcal S'(\mathbb R^{d})$, $z=(x,\omega)\in \mathbb R^{2d}$, the next properties easily follow:
\begin{align}
& T_x M_\omega u=e^{-2\pi i x\cdot\omega}M_\omega T_x u \label{SCAMBIO},\\
&\mathcal F(T_x u)=M_{-x}\mathcal Fu, &&\mathcal F^{-1}(T_x u)=M_x\mathcal F^{-1} u \label{TR1},\\
&\mathcal  F(M_\omega u)=T_\omega\mathcal Fu, &&\mathcal F^{-1}(M_\omega u)=T_{-\omega}\mathcal F^{-1} u\label{MO2},\\
&\mathcal F(\pi_z u)=e^{2\pi i x\cdot \omega}\pi_{\mathcal J^T z}\mathcal Fu; &&\mathcal F^{-1}(\pi_z u)=e^{2\pi i x\cdot \omega}\pi_{\mathcal J z}\mathcal F^{-1} u.\label{TM}
\end{align}

%STFT%%%%%%%%%%%%%%%%%

\subsection{Modulation spaces}
\begin{definition}\label{stft-def}
For a fixed nontrivial function $g$ the short-time Fourier transform (or Gabor transform)  of a function $f$ with respect to $g$ is defined as
\begin{equation}\label{stft}
V_gf(z):=(f,\pi_z g)=\int_{\mathbb R^d}f(t)e^{-2\pi i \omega\cdot t}\overline{g(t-x)}dt\,,\quad\mbox{for}\, \,z=(x,\omega)\in\mathbb R^{2d}\,,
\end{equation}  
whenever the integral can be considered, also in weak distribution sense.
\end{definition}
When $f,g\in L^2(\mathbb R^d)$ ,  $V_gf$ is a uniformly continuous function on $\mathbb R^{2d}$, $V_gf\in L^2(\mathbb R^{2d})$ and
$
\Vert V_gf\Vert_{L^2}=\Vert f\Vert_{L^2}\Vert g\Vert_{L^2}
$.
See e.g. \cite[\S3]{GRO1}.
% modulation spaces
\begin{definition}\label{M_def}
For a fixed $g\in\mathcal S(\mathbb R^d)\setminus\{0\}$ and $p,q\in[1,+\infty]$, the $m-$weighted modulation space $M^{p,q}_m(\mathbb R^{d})$ consists of all tempered distributions $f\in\mathcal S^\prime(\mathbb R^{d})$ such that 
\begin{equation}\label{M-norm}
\Vert f\Vert_{M^{p,q}_m}:=\left(\int_{\mathbb R^d}\left(\int_{\mathbb R^d}\vert V_g f(x,\omega)\vert^p m(x,\omega)^p dx\right)^{q/p}d\omega \right)^{1/q}< +\infty\, ,
\end{equation}
(with expected modification in the case when at least one among $p$ or $q$ equals $+\infty$).
\end{definition}
The definition of the space $M^{p,q}_m$ is independent of the choice of the window $g$, different windows $g$ provide equivalent norms and $M^{p,q}_m$ turns out to be a Banach space. In the case of $p=q$ we denote $M^p_m:=M^{p,p}_m$, when $m(x,\omega)\equiv 1$ we write $M^{p,q}$. 

For more details about modulation  spaces see \cite[\S 6.1, \S11]{GRO1}.

% distribuzioni periodiche da Hormander 7.2
\subsection{Periodic distributions}\label{SubPD}
We say that a distribution $u\in \mathcal D'(\mathbb R^n)$ is periodic (of period 1) if
\begin{equation}\label{PD}
T_\kappa u=u\quad \text{for any}\,\, \kappa\in \mathbb Z^n.
\end{equation}
Notice that $u$ is in this case a tempered distribution in $\mathcal S'(\mathbb R^n)$, then its Fourier transform $\hat u$ can be considered. Moreover it can be shown that
\begin{equation}\label{HS1} 
\hat u= \sum_{\kappa\in\mathbb Z^n} c_\kappa (u)\delta_\kappa,
\end{equation}
where the series converges in $\mathcal S'(\mathbb R^n)$,
\begin{equation}\label{HS2} 
c_\kappa(u):=\langle u, \phi e^{-2\pi i \langle \cdot, \kappa \rangle}\rangle= \widehat {u\phi}(\kappa),\\
\end{equation}
and $\phi\in C^\infty_0(\mathbb R^n)$ satisfies 
\begin{equation}\label{PHI}
\sum_{\kappa\in \mathbb Z^n} \phi (x-\kappa)=1.
\end{equation}
For the details see H\"ormander \cite[\S 7.2]{HOR0}.  Now by a straightforward application of Fourier inverse transform we obtain
\begin{equation}\label{HS3}
u= \sum_{\kappa\in\mathbb Z^n} c_\kappa(u) e^{2\pi i \langle \cdot, \kappa\rangle},
\end{equation}
 with convergence in $\mathcal S'$ and  $c_\kappa(u)$ defined in \eqref{HS2}.\\
Notice that a general periodic distribution $u$ can be regarded as a distribution on the torus $\mathbb T^n=\mathbb R^n/\mathbb Z^n$, that is a linear continuous  form on $C^\infty(\mathbb T^n)$. In the following  $\mathcal D'(\mathbb T^n)$ will be the topological dual space of $C^\infty(\mathbb T^n)$. Thus the coefficients in the expansion \eqref{HS3} can be regarded as the Fourier coefficients of $u$,  namely
\begin{equation}\label{HS4}
c_\kappa (u)=\langle u, e^{-2\pi i \langle \cdot, \kappa\rangle } \rangle_{_{\mathbb T^n}},
\end{equation}
where $\langle \cdot, \cdot\rangle_{\mathbb T^n}$ denotes the duality pair between $\mathcal D'(\mathbb T^n)$ and $C^\infty (\mathbb T^n)$. 
Agreeing with \eqref{HS4}, for  $f\in L^1(\mathbb T^n)$, using \eqref{PHI}, we get 
\begin{equation}\label{HS5}
c_\kappa(f)=\int_{\mathbb R^n} f(x)\phi(x) e^{-2\pi i x\cdot \kappa}\, dx=\int_{[0,1]^n}  f(x) e^{-2\pi i x\cdot \kappa} \, dx.
\end{equation} 
In Appendix \ref{PER-DIST} we clarify how to make rigorous the above calculations in $\mathcal D'(\mathbb T^n)$, see in particular \eqref{APP4}.\\ 
For a detailed discussion about distributions on the torus one can see  the book of M. Ruzhansky and V. Turunen \cite{RuzTur}.
\smallskip

Consider now and in the whole paper $L=(a_{ij})\in GL(n)$, the space of invertible matrices of size $n\times n$.\\
For $\mathbb T^n_L:=\mathbb R^n/L\mathbb Z^n$, we still identify the set of $L$-periodic distributions, that is $u\in\mathcal D'(\mathbb R^n)$ such that, for any $\kappa \in \mathbb Z^n, T_{L\kappa}u=u$, with the space $\mathcal D'(\mathbb T^n_L)$ of linear continuous forms on $C^\infty(\mathbb T^n_L)$. Notice that also in this case $\mathcal D'(\mathbb T^n_L)\subset\mathcal S'(\mathbb R^n)$. \\
For any $u\in \mathcal D'(\mathbb T^n_L)$ observe that  $v=u(L\cdot)$ is a  $1$-periodic distribution. Applying then  the  Fourier expansion
$v=\sum_{\kappa \in \mathbb Z^n}c_{\kappa}(v) e^{2\pi i \langle \cdot,\kappa\rangle}$, $c_{\kappa}(v)=\langle v, e^{-2\pi i \langle \cdot, \kappa\rangle}\rangle_{\mathbb T^n}$, we obtain 
\begin{equation}\label{FL1}
u=v(L^{-1} \cdot)=\sum_{\kappa\in\mathbb Z^n} c_{\kappa}(v) e^{2\pi i \langle k,L^{-1}\cdot\rangle}=\sum_{\kappa\in\mathbb Z^n} c_{\kappa}(v) e^{2\pi i \langle L^{-T} k,\cdot\rangle},
\end{equation}
where $L^{-T}:=(L^{-1})^T$ denotes the transposed of the inverse matrix of $L$ and
\begin{equation}\label{FL2}
c_{\kappa}(v)=\langle  u(L\cdot), e^{-2\pi i \langle  \kappa, \cdot\rangle}   \rangle_{\mathbb T^n}=
\frac{1}{\vert\textup{det}L\vert}\langle  u, e^{-2\pi i \langle  L^{-T}\kappa, \cdot\rangle}   \rangle_{\mathbb T^n_L}.
\end{equation}
Thus we obtain for any $u\in \mathcal D'(\mathbb T^n_L)$ the Fourier expansion
\begin{equation}\label{FL5}
u= \sum_{\kappa\in\mathbb Z^n} c_{\kappa,L}(u)e^{2\pi i \langle L^{-T} k,\cdot\rangle},
\end{equation}
with the Fourier coefficients
\begin{equation}\label{FL6}
c_{\kappa,L}(u):=c_\kappa(u(L\cdot))= \frac{1}{\vert\textup{det} L\vert}\langle  u, e^{-2\pi i \langle  L^{-T}\kappa, \cdot\rangle}   \rangle_{\mathbb T^n_L}.
\end{equation}
For short in the following we set $c_\kappa(u)=c_{\kappa, L}(u)$.\\
%%%%%%%
Consider $L^p(\mathbb T^n_L)$, $1\leq p <\infty$,  the set of measurable L-periodic functions on $\mathbb R^n$ such that $\Vert f\Vert_{L^p(\mathbb T^n_L)}=\int_{L[0,1]^n} \vert f(x)\vert ^p\, dx<\infty$, with obvious modification for the definition of $L^\infty(\mathbb T^n_L)$.\\
Then for $f\in L^1(\mathbb T^1_L)$ the following:\\
\begin{equation}\label{FS1}
f(x)=\sum_{\kappa\in \mathbb Z^n} c_\kappa (f) e^{2\pi i L^{-T}\kappa\cdot x}
\end{equation}
holds with convergence  in $\mathcal S'(\mathbb R^n)$, and 
\begin{equation}\label{FS2}
c_\kappa(f)=\frac{1}{\vert \textup{det}L\vert }\int_{L[0,1]^n} e^{2\pi i L^{-T}\kappa\cdot x} f(x)\, dx\\
\end{equation} 
\begin{remark}
It can be useful to write the Fourier expansion of $u\in\mathcal D'(\mathbb T^n_L)$ in terms of the lattice  $\Lambda=L \mathbb Z^n$:
%%%%
\begin{equation}\label{FL3}
u=\sum_{\mu\in \Lambda^{\bot}}\hat u(\mu)e^{2\pi i\langle  \mu,\cdot \rangle},
\end{equation}
with \begin{equation}\label{FL4}
\hat u(\mu):=\frac{1}{\textup{vol}(\Lambda)}\langle  u, e^{-2\pi i \langle  \mu, \cdot\rangle}   \rangle_{\mathbb T^n_L}.
\end{equation}
Here $\Lambda^\perp:=L^{-T}\mathbb Z^n$ and  $\textup{vol}(\Lambda):=\vert \textup{det} L\vert=\text{meas} \,(L[0,1]^n)$  are respectively called dual lattice and volume of $\Lambda$.
\end{remark}

% examples
\begin{example}\label{EX1}
For $\alpha=(\alpha_1, \dots, \alpha_n)\in \mathbb R^n$, $\alpha_j>0$, let us consider the diagonal matrix  $A\in GL(n)$, together with its inverse
\begin{equation}\label{DM}
A=\left(
\begin{array}{cccc}
\alpha_1&\dots&0&\\
\vdots&\ddots&\vdots&\\
0&\dots&\alpha_n&
\end{array}
\right)
\quad ; \quad
A^{-1}=\left(
\begin{array}{cccc}
\frac{1}{\alpha_1}&\dots&0&\\
\vdots&\ddots&\vdots&\\
0&\dots&\frac{1}{\alpha_n}&
\end{array}
\right)
\end{equation}
and introduce for $\kappa\in \mathbb Z^n$ the following notations, $\alpha k:=A\kappa= (\alpha_1 k_1, \dots, \alpha_n k_n)$; $\frac{\kappa}{\alpha}:=A^{-1}k=
\left( \frac{\kappa_1}{\alpha_1}, \dots, \frac{\kappa_n}{\alpha_n}\right)$, $\prod\alpha:=\prod_{j=1}^n \alpha_j$ .  Consider now  an  $A$-periodic function $f$ which satisfies $\int_0^{\alpha_1}\dots\int _{0}^{\alpha_n}  \vert f(x)\vert \, dx_1\dots dx_n<+\infty$, then directly from \eqref{FS1}, \eqref{FS2} we obtain 
\begin{align}
&f(x)=\sum_{\kappa\in \mathbb Z^n} c_k(f) e^{2\pi i \frac{k}{\alpha}\cdot x},& \label{ES1.1}\\
\text{where}\\
\\
&c_k(f)= \frac{1}{\prod \alpha}\int_0^{\alpha_1}\dots\int_0^{\alpha_n} f(x) e^{-2\pi i \frac{\kappa}{\alpha}\cdot x}\, dx_1\dots dx_n.&\label{ES1.2}
\end{align}
\end{example}

%%Pseudodifferential operators
 \section{Pseudodifferential Operators with periodic symbol}\label{SPDO}
We say $\tau$ pseudodifferential operator, $0\leq \tau\leq 1$, with symbol $p(z)=p(x,\omega) \in \mathcal S'(\mathbb R^{2d})$,  the operator acting from $\mathcal S(\mathbb R^{d})$ to $\mathcal S'(\mathbb R^{d})$ defined  by
\begin{equation}\label{PS1}
\textup{Op}_{\tau}(p)u(x):=\int_{\mathbb R^d_\omega}\int_{\mathbb R^d_y}e^{2\pi i (x-y)\cdot \omega}p\left((1-\tau) x+\tau y, \omega\right) u(y)\, dy\,d\omega, \quad u\in \mathcal S(\mathbb R^d).
\end{equation}
The formal integration must be understood in distribution sense.
For the definition and development of pseudodifferential operators see the basic texts  \cite{Shu87}, \cite{Hor94}.\\
%For simplicity of notation we set in the following for $L=(a_{ij})\in GL(2d)$:
%\begin{equation}\label{COFACT}
%L^{-1}=\frac{1}{\textup{det} L} \mathcal L^T\quad, \quad\mathcal L =(m_{ij})\quad , \quad m_{ij}=(-1)^{i+j}\textup {det}\left( (a_{rs})_{{r\neq i}\atop {s\neq j}} \right).
%\end{equation}
%Namely $\mathcal L $ is the cofactor matrix of $L$.

% versione senza matrice dei cofattori

For $I$ and $0$  respectively the identity and null matrices of dimension $d\times d$, let us introduce the $d\times 2d$ matrices
\begin{equation}\label{MAT12}
I_1=(I,0), \quad I_2=(0,I)
\end{equation}

\begin{proposition}\label{PROPPST}
Consider $p\in \mathcal D'(\mathbb T^{2d}_L)$, $L\in GL(2d)$.
Then for any $0\leq \tau\leq 1$ and $u\in \mathcal S(\mathbb R^d)$ we can write
\begin{equation}\label{PST2}
\textup{Op}_\tau(p)u=\sum_{\kappa\in \mathbb Z^{2d}}c_\kappa(p) e^{{2\pi i \tau}\langle I_2 L^{-T}\kappa\,,\,  I_1L^{-T}\kappa\rangle}\pi_{_{\mathcal JL^{-T} \kappa}}u,
\end{equation}
where $c_\kappa(p)$ are the Fourier coefficients defined in \eqref{FL6} and $\mathcal J$ the matrix introduced in \eqref{SYMPMAT}.
\end{proposition}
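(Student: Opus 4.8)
The plan is to expand the symbol into its Fourier series \eqref{FL5}, insert it into the definition \eqref{PS1}, and reduce the whole statement to a single computation of $\textup{Op}_\tau$ on a pure exponential. Concretely, by \eqref{FL5}--\eqref{FL6} we may write $p=\sum_{\kappa\in\mathbb Z^{2d}}c_\kappa(p)\,e_\kappa$ with $e_\kappa(z):=e^{2\pi i\langle L^{-T}\kappa,z\rangle}$ and convergence in $\mathcal S'(\mathbb R^{2d})$. The map $q\mapsto\textup{Op}_\tau(q)$ is linear and weak-$*$ sequentially continuous from $\mathcal S'(\mathbb R^{2d})$ into the space of continuous operators $\mathcal S(\mathbb R^d)\to\mathcal S'(\mathbb R^d)$: for fixed $u,v\in\mathcal S(\mathbb R^d)$ the pairing $\langle\textup{Op}_\tau(q)u,v\rangle$ equals $\langle q,\Phi_{u,v}\rangle_{\mathcal S',\mathcal S}$ for a function $\Phi_{u,v}\in\mathcal S(\mathbb R^{2d})$ depending bilinearly and continuously on $(u,v)$ (a $\tau$-Wigner-type transform of $u$ and $v$). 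Hence the Fourier series of $p$ passes through $\textup{Op}_\tau$ term by term, $\textup{Op}_\tau(p)u=\sum_\kappa c_\kappa(p)\,\textup{Op}_\tau(e_\kappa)u$ with convergence in $\mathcal S'(\mathbb R^d)$, and it remains only to identify each summand.

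For the exponential computation, write $L^{-T}\kappa=(\xi,\eta)$ with $\xi=I_1L^{-T}\kappa$ and $\eta=I_2L^{-T}\kappa$, so that $e_\kappa(x,\omega)=e^{2\pi i(\xi\cdot x+\eta\cdot\omega)}$. Substituting into \eqref{PS1} and carrying out the $y$-integration first — which turns the resulting $\omega$-integral into an absolutely convergent one, since $\widehat u\in\mathcal S(\mathbb R^d)$ — one obtains, using \eqref{MO2},
\[
\textup{Op}_\tau(e_\kappa)u(x)=e^{2\pi i(1-\tau)\xi\cdot x}\int_{\mathbb R^d}e^{2\pi i(x+\eta)\cdot\omega}\,\widehat u(\omega-\tau\xi)\,d\omega .
\]
After the change of variable $\omega\mapsto\omega+\tau\xi$ the remaining integral equals $\mathcal F^{-1}\widehat u(x+\eta)=u(x+\eta)$, while the accumulated phase is $e^{2\pi i[(1-\tau)\xi\cdot x+\tau\xi\cdot(x+\eta)]}=e^{2\pi i\tau\,\xi\cdot\eta}\,e^{2\pi i\xi\cdot x}$, so that $\textup{Op}_\tau(e_\kappa)u=e^{2\pi i\tau\,\xi\cdot\eta}\,M_\xi T_{-\eta}u$. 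Now by \eqref{TFS} one has $M_\xi T_{-\eta}=\pi_{(-\eta,\xi)}$, by \eqref{SYMPMAT} one has $(-\eta,\xi)=\mathcal J(\xi,\eta)=\mathcal JL^{-T}\kappa$, and $\xi\cdot\eta=\langle I_1L^{-T}\kappa,I_2L^{-T}\kappa\rangle=\langle I_2L^{-T}\kappa,I_1L^{-T}\kappa\rangle$; hence $\textup{Op}_\tau(e_\kappa)u=e^{2\pi i\tau\langle I_2L^{-T}\kappa,I_1L^{-T}\kappa\rangle}\,\pi_{\mathcal JL^{-T}\kappa}u$, and summing against $c_\kappa(p)$ gives exactly \eqref{PST2}.

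The only genuinely delicate point — and the step I would treat most carefully — is the legitimacy of the manipulations of the oscillatory integral \eqref{PS1}, which a priori is defined only in the distributional sense. For the bounded smooth symbols $e_\kappa$ this is handled in the standard way: insert a cutoff $\chi(\varepsilon\omega)$ with $\chi\in C^\infty_0(\mathbb R^d)$, $\chi(0)=1$, perform the now genuinely convergent Fubini computation above, and let $\varepsilon\to0^+$ in $\mathcal S'(\mathbb R^d)$ — equivalently, test against $v\in\mathcal S(\mathbb R^d)$ throughout. The same testing-against-$v$ viewpoint, combined with the weak-$*$ convergence $\sum c_\kappa(p)e_\kappa\to p$ in $\mathcal S'(\mathbb R^{2d})$ and the continuity of $q\mapsto\langle\textup{Op}_\tau(q)u,v\rangle$ recorded above, makes rigorous the term-by-term passage of the series through $\textup{Op}_\tau$ and completes the proof.
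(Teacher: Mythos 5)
Your proposal is correct and follows essentially the same route as the paper: expand $p$ in its Fourier series \eqref{FL5}, pass the sum through $\textup{Op}_\tau$, and evaluate the action on each exponential $e^{2\pi i\langle L^{-T}\kappa,\cdot\rangle}$ by integrating first in $y$ and then in $\omega$, arriving at the same phase $e^{2\pi i\tau\langle I_2L^{-T}\kappa, I_1L^{-T}\kappa\rangle}$ and the shift $\pi_{\mathcal JL^{-T}\kappa}$ (the paper reaches it via the commutation rule \eqref{SCAMBIO}, you via direct phase bookkeeping, which is the same computation). Your added justification of the term-by-term interchange through the $\tau$-Wigner duality and the cutoff regularization is more explicit than the paper's appeal to ``convergence in $\mathcal S'$ and formal integration in distribution sense,'' but it does not change the argument.
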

\begin{proof}
Using \eqref{FL5}, \eqref{FL6} we perform the Fourier expansion of the symbol $p$
\begin{equation}\label{PST3.1} 
p=\sum_{\kappa\in \mathbb Z^{2d}} c_\kappa(p) e^{2\pi i \langle L^{-T}\kappa, \cdot\rangle}
\end{equation}
with convergence in $\mathcal S'(\mathbb R^{2d})$.
Then for any $u\in \mathcal S(\mathbb R^d)$ we get
\begin{equation}\label{PST4}
\textup{Op}_\tau(p)u(x)=\iint e^{2\pi i (x-y)\cdot\omega}\sum_{\kappa\in\mathbb Z^{2d}} c_\kappa(p) e^{2\pi i 
L^{-T}\kappa \cdot \left((1-\tau) x+\tau y, \omega \right)} u(y)\, dy\, d\omega.
\end{equation}
Considering the decomposition 
\begin{equation}\label{LDEC}
L^{-T}=\left(
\begin{array}{c}
I_1 L^{-T}\\
I_2 L^{-T}
\end{array}
\right),
\end{equation} 
we obtain 
\begin{equation}\label{PST6}
L^{-T}\kappa \cdot\left( (1-\tau)x+\tau y, \omega  \right)
= I_1 L^{-T} \kappa\cdot\left( (1-\tau)x+ \tau y\right)+ I_2 L^{-T}\kappa\cdot\omega.
\end{equation}
Let us set for simplicity of notation $L_j^{-T}=I_j L^{-T}$, $j=1,2$. Then in view of convergence in $\mathcal S'$ and formal integration in distribution sense it follows
\begin{equation}\label{PST7}
\begin{array}{l}
\begin{array}{ll}
\textup{Op}_\tau(p)u(x)=\displaystyle
\sum\limits_{\kappa\in \mathbb Z^{2d}}c_\kappa(p) \int\!\!\!\int e^{2\pi i (x-y)\cdot \omega}
&e^{2\pi i  L^{-T}_1\kappa\cdot \left( (1-\tau)x+\tau y\right)}\times\\
& \times
e^{2\pi i L^{-T}_2\kappa \cdot \omega} u(y)\, dy\, d\omega=
\end{array}
\\
\\
=\displaystyle\sum\limits_{\kappa\in\mathbb Z^{2d}}c_\kappa(p)\int 
e^{2\pi i x\cdot\omega}e^{2\pi i L^{-T}_2\kappa\cdot \omega}
\int e^{-2\pi i y\cdot\omega}e^{2\pi i L^{-T}_1\kappa\cdot \left( (1-\tau)x+\tau y\right)} u(y)\, dy\, d\omega=
\\
\\
=\displaystyle\sum\limits_{\kappa\in \mathbb Z^{2d}} c_\kappa(p)e^{2\pi i L^{-T}_1\kappa\cdot \left( (1-\tau)x\right)}\int e^{2\pi i \left( x+ L^{-T}_2\kappa \right)\cdot \omega}
\int e^{-2\pi i \left(\omega-\tau L^{-T}_1\kappa \right)\cdot y} u(y)\, dy\,d\omega=
\\
\\
=\displaystyle \sum\limits_{\kappa\in\mathbb Z^{2d}}c_\kappa(p)e^{2\pi i (1-\tau) L^{-T}_1\kappa\cdot x}\int e^{2\pi i \left(x+ L^{-T}_2\kappa \right)\cdot\omega}
\hat u\left(\omega-\tau L^{-T}_1\kappa\right)\, d\omega=
\\
\\
=\displaystyle\sum\limits_{\kappa\in\mathbb Z^{2d}}c_\kappa(p)e^{2\pi i (1-\tau) L^{-T}_1\kappa\cdot x}\int 
e^{2\pi i \left(x+L^{-T}_2\kappa\right)\cdot \omega} T_{\tau L^{-T}_1\kappa}\hat u(\omega)\, d\omega=
\\
\\
=\displaystyle\sum\limits_{\kappa\in\mathbb Z^{2d}}c_\kappa(p)e^{2\pi i (1-\tau) L^{-T}_1\kappa\cdot x}\int e^{2\pi i\left( x+L^{-T}_2 \kappa\right)\cdot\omega}
\widehat{M_{\tau L^{-T}_1\kappa} u}(\omega)\, d\omega=
\\
\\
=\displaystyle\sum\limits_{\kappa\in\mathbb Z^{2d}}c_\kappa(p)e^{2\pi i (1-\tau) L^{-T}_1\kappa\cdot x}
\left(M_{\tau L^{-T}_1\kappa} u\right)
\left(x+L^{-T}_2 \kappa\right)=
\\
\\
\displaystyle=\sum\limits_{\kappa\in\mathbb Z^{2d}}c_\kappa(p)e^{2\pi i (1-\tau) L^{-T}_1\kappa\cdot x}
\,T_{-L^{-T}_2\kappa}M_{\tau L^{-T}_1\kappa}u(x)=
\\
\\
\displaystyle=\sum\limits_{\kappa\in\mathbb Z^{2d}}c_\kappa(p)M_{(1-\tau) L^{-T}_1 \kappa}T_{- L^{-T}_2\kappa}M_{\tau L^{-T}_1\kappa}u(x).
\end{array}
\end{equation}
The proof ends by observing that, thanks to \eqref{SCAMBIO},
\begin{equation}\label{PST7.1}
\begin{array}{l}
M_{(1-\tau) L^{-T}_1 \kappa} T_{- L^{-T}_2\kappa}M_{\tau L^{-T}_1\kappa}=\\
\\
=e^{-2\pi i \langle - L^{-T}_2 \kappa , \tau L^{-T}_1\kappa \rangle}
M_{(1-\tau) L^{-T}_1 \kappa} M_{\tau L^{-T}_1\kappa} T_{- L^{-T}_2\kappa}=
\\
\\
=e^{2\pi i \tau\langle L^{-T}_2 \kappa , L^{-T}_1\kappa\rangle}
M_{ L^{-T}_1 \kappa}T_{- L^{-T}_2\kappa}=e^{2\pi i \tau\langle L^{-T}_2 \kappa, L^{-T}_1\kappa\rangle}\pi_{\mathcal J L^{-T}\kappa}.
\end{array}
\end{equation}
\end{proof}
\begin{remark}\label{REMPST}
Consider  $\mu= L^{-T}\kappa\in  \Lambda^\perp$. In view of \eqref{PST2} the pseudodifferential operator $\textup{Op}_\tau$ may be written in lattice notation
\begin{equation}\label{REMPST1}
\textup{Op}_\tau(p)=\sum_{\mu\in \Lambda^\perp}\hat p(\mu)e^{2\pi i \tau\langle I_2\mu , I_1\mu\rangle}\pi_{\mathcal J\mu}.
\end{equation}
\end{remark}
\begin{example}\label{EX2}
Let  $a=(a_1, \dots a_d)$, $b=(b_1, \dots, b_d)$ be two vectors in $(\mathbb R\setminus\{0\})^d$. Using the notation in Example \ref{EX1}, we say that a symbol $p\in \mathcal S'(\mathbb R^{2d})$ is $ab$-periodic if, for any $\kappa=(h, k)\in \mathbb Z^{2d}$, $p(\cdot+ah, \cdot+bk)=p(\cdot, \cdot)$. Considering the matrix 
\begin{equation}\label{EX2.1}
L=\left(
\begin{array}{cc}
A&0\\
0&B
\end{array}
\right),
\end{equation}
with $A, B$ diagonal matrices defined as in \eqref{DM}, it is trivial to show that $I_1 L^{-T}\kappa=\frac {h}{a}$ and $I_2 L^{-T}\kappa=\frac {k}{b}$. Then for any $u\in \mathcal D'(\mathbb R^d)$ 
\begin{equation}\label{EX2.2}
\pi_{\mathcal J L^{-T}\kappa}u=
M_{\frac{h}{a}}T_{-\frac{k}{b}}u=
e^{2\pi i \langle \frac{h}{a}, \cdot\rangle}u(\cdot+\frac{k}{b}).
\end{equation}
Thus for any $0\le\tau\le1$ we have
\begin{equation}\label{EX2.3}
\textup{Op}_\tau p(u)= \sum_{(h,k)\in \mathbb Z^{2d}}c_{h,k}(p)e^{2\pi i \tau \langle \frac {h}{a}, \frac{k}{b}\rangle} e^{2\pi i \langle \frac{h}{a}, \cdot\rangle}u(\cdot+ \frac{k}{b}),
\end{equation}
with convergence in $\mathcal S'(\mathbb R^d)$ and $c_{h,k}(p)$ defined by formal integration
\begin{equation}\label{EX2.4}
c_{h,k}(p)=\frac{1}{\vert\prod ab\vert}\int_0^{a_1}\!\!\!\!\!\!\!\dots\!\!\int_0^{a_d}\!\!\!\!\int_0^{b_1}\!\!\!\!\!\!\!\dots\!\!\int_0^{b_d} p(x, \omega)e^{-2\pi i \left(\frac{h}{a}\cdot x +\frac{k}{b}\cdot \omega\right)}\, dx\, d \omega\,\,.
\end{equation}
\end{example}

%%%%Continuity
\section{Continuity}\label{CONT}
We say that a Banach space $\mathcal S(\mathbb R^{d})\hookrightarrow X\hookrightarrow \mathcal S' (\mathbb R^d)$, with $\mathcal S(\mathbb R^d)$ dense in $X$,  is  \textit{time frequency shifts invariant} (tfs invariant from now on)  if for some polynomial weight function $v$ and $C>0$
\begin{equation}\label{PST8}
\Vert \pi_{z} u\Vert_X\leq C v(z)\Vert u\Vert_X, \quad u\in X,\quad z\in \mathbb R^{2d}.
\end{equation}

\begin{example}\label{EXINV}

\begin{itemize}

\item The $m-$weighted modulation spaces $M^{p,q}_m(\mathbb R^d)$, $p, q\in [1, + \infty]$ are time frequency shifts invariant, see \cite[Theorem 11.3.5] {GRO1}.

\item  The $m-$weighted Lebesgue space $L^p_m(\mathbb R^{d})$ and $m-$weighted Fourier-Lebesgue space $\mathcal FL^p_m(\mathbb R^d)$ are respectively defined as the sets of measurable functions and tempered distributions in $\mathbb R^d$, making finite the norms $\Vert f\Vert_{L^p_{m}}:=\Vert m(\cdot,\omega_0)f\Vert_{L^p}$ and $\Vert f\Vert_{\mathcal FL^p_{m}}=\Vert m(x_0,\cdot) \hat f\Vert_{L^p}$, whatever are $(x_0, \omega_0)\in\mathbb R^{2d}$. (Equivalent norms in $L^p_m(\mathbb R^{d})$ and  $\mathcal FL^p_m(\mathbb R^d)$ should correspond to different choices of $(x_0, \omega_0)$. See \cite[Remark 1.1]{PTT1}) 

For $z=(x,\omega)\in \mathbb R^{2d}$, assuming $(x_0,\omega_0)=(0,0)$ and using \eqref{SOPS1},  we compute:
\begin{equation}\label{EXINV1}
\begin{array}{ll}
\Vert\pi_z f\Vert^p_{L^p}=\Vert M_\omega T_x f\Vert^p_{L^p_m}&=\int m(t,0)^p\vert f(t-x)\vert^p\, dt=\int m(t+x,0)^p\vert f(t)\vert^p \, dt\\
&\leq C^pv(x,0)^p\int m(t,0)^p \vert f(t)\vert^p\, dt=C^p v(x,0)^p\Vert f\Vert ^p_{L^p_m}
\end{array}
\end{equation}
and
\begin{equation}\label{EXINV2}
\begin{array}{l}
\Vert \pi_z f\Vert^p_{\mathcal FL^p_m}=\Vert M_\omega T_x f\Vert^p_{\mathcal FL^p_m}=\int m(0,t)^p\vert \widehat{M_\omega T_x f(t)}\vert^p\, dt\\
=\int m(0,t)^p\vert T_\omega M_{-x}\hat f(t)\vert^p \, dt
=\int  m(0,t)^p\vert \hat f(t-\omega)\vert^p\, dt\\
\leq C^p v(0,\omega)^p\int m(0,t)^p \vert \hat f(t)\vert^p\, dt=C^p v(0,\omega)^p\Vert f\Vert ^p_{\mathcal FL^p_m}.
\end{array}
\end{equation}
Then $L^p_m(\mathbb R^d)$ and $\mathcal FL^P_m(\mathbb R^d)$ are time frequency shifts invariant for any $p\in [1, +\infty]$.
\end{itemize}
\smallskip

In both the examples the positive constants $C$ are  directly obtained by \eqref{SOPS1} and depend only on the weights $m$.
\end{example}

\begin{theorem}\label{TEOPST}
Let $X$ be a time frequency shifts  invariant space, $L\in GL(2d)$, $p\in \mathcal D'(\mathbb T^{2d}_L)$. Assume that the Fourier coefficients $c_\kappa(p)$ defined in \eqref{FL6} satisfy,
\begin{equation}\label{TEOPST1}
\Vert c_\kappa(p)\Vert_{\ell^1_{L, v}}:= \sum_{\kappa\in \mathbb Z^{2d}} v\left(\mathcal JL^{-T}\kappa\right)\vert c_\kappa(p)\vert <+\infty.
\end{equation}
 Then  for any $\tau\in [0,1]$ the operator $\textup{Op}_\tau(p)$ is bounded on $X$ and
\begin{equation}\label{TEOPST2}  
\Vert \textup{Op}_\tau(p)\Vert_{\mathcal L(X)}\leq C\Vert c_\kappa(p)\Vert_{\ell^1_{L, v}},
\end{equation}
Where $C$ is the constant in \eqref{PST8}.\\
%In particular if $X$ is time frequency shifts invariant and $\left\{ \hat p(\kappa)\right\}_{\kappa\in \mathbb Z^{2d}}\in \ell ^1$ the operator norm is $\Vert \textup{Op}_\tau(p)\Vert_{\mathcal L(X)}=\Vert c_\kappa(p)\Vert_{\ell^1_{L,1}}$.
\end{theorem}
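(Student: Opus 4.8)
The plan is to deduce everything from the time--frequency representation \eqref{PST2} of Proposition \ref{PROPPST}, exploiting that the scalar factors appearing there have modulus one, together with the tfs invariance \eqref{PST8} and the summability hypothesis \eqref{TEOPST1}. First I would fix $u\in\mathcal S(\mathbb R^d)$ (which lies in $X$, since $\mathcal S(\mathbb R^d)\hookrightarrow X$) and invoke Proposition \ref{PROPPST} to write
\begin{equation*}
\textup{Op}_\tau(p)u=\sum_{\kappa\in\mathbb Z^{2d}}c_\kappa(p)\,e^{2\pi i\tau\langle I_2L^{-T}\kappa,\,I_1L^{-T}\kappa\rangle}\,\pi_{\mathcal JL^{-T}\kappa}u ,
\end{equation*}
the series converging in $\mathcal S'(\mathbb R^d)$. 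Since each exponential factor has modulus one and, by \eqref{PST8}, $\|\pi_{\mathcal JL^{-T}\kappa}u\|_X\le C\,v(\mathcal JL^{-T}\kappa)\,\|u\|_X$, the truncated sums
\begin{equation*}
S_Nu:=\sum_{|\kappa|\le N}c_\kappa(p)\,e^{2\pi i\tau\langle I_2L^{-T}\kappa,\,I_1L^{-T}\kappa\rangle}\,\pi_{\mathcal JL^{-T}\kappa}u\in X
\end{equation*}
would satisfy, for $N'>N$,
\begin{equation*}
\|S_{N'}u-S_Nu\|_X\le C\,\|u\|_X\sum_{N<|\kappa|\le N'}v(\mathcal JL^{-T}\kappa)\,|c_\kappa(p)| .
\end{equation*}

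By \eqref{TEOPST1} the right-hand side tends to $0$ as $N\to\infty$, so $(S_Nu)_N$ is Cauchy in the Banach space $X$ and converges there to some $w\in X$ with $\|w\|_X\le C\,\|c_\kappa(p)\|_{\ell^1_{L,v}}\,\|u\|_X$. Since $X\hookrightarrow\mathcal S'(\mathbb R^d)$ continuously, $S_Nu\to w$ also in $\mathcal S'$, and comparing with the $\mathcal S'$-convergence of the same series to $\textup{Op}_\tau(p)u$ forces $w=\textup{Op}_\tau(p)u$. Thus $\textup{Op}_\tau(p)u\in X$ and $\|\textup{Op}_\tau(p)u\|_X\le C\,\|c_\kappa(p)\|_{\ell^1_{L,v}}\,\|u\|_X$ for every $u\in\mathcal S(\mathbb R^d)$.

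To conclude, since $\mathcal S(\mathbb R^d)$ is dense in $X$ and the bound just obtained is uniform on $\mathcal S(\mathbb R^d)$, the operator $\textup{Op}_\tau(p)|_{\mathcal S(\mathbb R^d)}$ extends uniquely to a bounded linear operator on $X$ --- still denoted $\textup{Op}_\tau(p)$ --- with $\|\textup{Op}_\tau(p)\|_{\mathcal L(X)}\le C\,\|c_\kappa(p)\|_{\ell^1_{L,v}}$, which is \eqref{TEOPST2}. I do not expect a real obstacle here: the only points deserving a little care are that the polynomial weight $v$ in \eqref{TEOPST1} is precisely the one attached to $X$ via \eqref{PST8}, so the two occurrences of $v$ are consistent, and the interplay between convergence in $X$ and in $\mathcal S'$, which is handled through the embedding $X\hookrightarrow\mathcal S'(\mathbb R^d)$ as above. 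Alternatively, one can bypass the density argument by \emph{defining}, for $u\in X$, $\textup{Op}_\tau(p)u$ as the $X$-convergent series displayed above --- each term being meaningful because $\pi_z$ maps $X$ into $X$ by \eqref{PST8} --- and then checking, through Proposition \ref{PROPPST}, that it agrees with \eqref{PS1} when $u\in\mathcal S(\mathbb R^d)$.
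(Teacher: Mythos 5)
Your proposal is correct and follows essentially the same route as the paper: apply the representation of Proposition \ref{PROPPST}, use that the exponential factors are unimodular together with the tfs invariance \eqref{PST8} to bound the series term by term, and conclude by density of $\mathcal S(\mathbb R^d)$ in $X$. The paper states the key estimate directly, while you additionally spell out the Cauchy-in-$X$ argument and the identification of the $X$-limit with the $\mathcal S'$-limit, which is just a more detailed justification of the same steps.
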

In lattice terms, see \eqref{REMPST1}, we can write
\begin{equation}\label{TEOPST3}
\Vert c_\kappa (p)\Vert_{\ell^1_{L,v}}=\sum_{\mu=\in \Lambda^\perp}\vert \hat p(\mu)\vert v(\mathcal J \mu):=\Vert \hat p(\mu)\Vert_{\ell^1_v},
\end{equation}
where $\mu= L^{-T}\kappa$,  $\kappa\in \mathbb Z^{2d}$.
\begin{proof}
Using Proposition \ref{PROPPST} and in view of the tfs invariance \eqref{PST8} we obtain for $u\in\mathcal S(\mathbb R^d)$
\begin{equation}\label{TEOPST3.1}
\Vert \textup{Op}_\tau(p)u\Vert_X\le \sum_{\kappa \in \mathbb Z^{2d}}\vert c_\kappa(p)\vert\Vert \pi_{\mathcal J L^{-T}\kappa}u\Vert_X\le 
 C\sum_{\kappa \in \mathbb Z^{2d}}\vert c_\kappa(p)\vert v(\mathcal JL^{-T}\kappa)\Vert u\Vert_X,
\end{equation}
where $C$ is the constant in \eqref{PST8}. The proof follows from the density of $\mathcal S(\mathbb R^d)$ in $X$.
\end{proof}
% Subsection Fourier Multipliers

\subsection{The case of Fourier Multipliers}
Assume now that the symbol is independent of $x$, namely consider a Fourier multiplier  $\sigma=\sigma(\omega)\in\mathcal S^{\prime}(\mathbb R^d)$, $P-${\em periodic}, with  $P\in GL(d)$, that is
\begin{equation}\label{FM_eqt:1}
T_{Pk}\sigma=\sigma\,,\quad\forall\,k\in\mathbb Z^{d}\,.
\end{equation}
In such a case the related pseudodifferential operator, as a linear bounded operator from $\mathcal S(\mathbb R^d)$ to $\mathcal S^{\prime}(\mathbb R^d)$, reads as
\begin{equation}\label{FM_eqt:2}
\sigma(D)u=\mathcal F^{-1}(\sigma\hat u)\,,\quad\forall\,u\in\mathcal S(\mathbb R^d)\,.
\end{equation}
Inserting within \eqref{FM_eqt:2} the Fourier expansion of $\sigma$
\begin{equation}\label{FM_eqt:3}
\sigma(\omega)=\sum\limits_{k\in\mathbb Z^d}c_k(\sigma)e^{2\pi i P^{-T}k\cdot\omega}\,,
\end{equation}
where the series is convergent in $\mathcal S^\prime(\mathbb R^d)$,  we compute
\begin{equation}\label{FM_eqt:4}
\sigma(D)u=\mathcal F^{-1}(\sigma\hat u)=\sum\limits_{k\in\mathbb Z^d}c_k(\sigma)T_{-P^{-T}k}u\,,\quad\mbox{for any}\,\,u\in\mathcal S(\mathbb R^d)\,, 
\end{equation}
with convergence of the series in $\mathcal S^\prime(\mathbb R^d)$; in \eqref{FM_eqt:2}, \eqref{FM_eqt:3}, $c_k(\sigma)$ stand as usual for the Fourier coefficients of $\sigma$. 
\newline 
The following result shows that in the case of Fourier multiplier  operators the sufficient boundedness condition given in Theorem \ref{TEOPST} is also necessary for $\sigma(D)$ to be extended as a linear bounded operator in the weighted Lebesgue space $L^1_v(\mathbb R^d)$, where $v=v(x)$ is a polynomial weight function in $\mathbb R^d$.
\begin{proposition}\label{FM_prop:1}
Let  $\sigma=\sigma(\omega)\in\mathcal S^{\prime}(\mathbb R^d)$ be $P-${\em periodic} for $P\in GL(d)$. If we assume that $\sigma(D)$ extends to a bounded operator in $L^1_v(\mathbb R^d)$, that is
\begin{equation}\label{FM_eqt:6} 
\Vert\sigma(D)u\Vert_{L^1_v}\le C\Vert u\Vert_{L^1_v}\,,\quad\forall\,u\in\mathcal S(\mathbb R^d)\,,
\end{equation}
for a constant $C>0$, then
\begin{equation}\label{FM_eqt:7}
\sum_{k\in \mathbb Z^{d}} v\left(P^{-T}k\right)\vert c_k(\sigma)\vert <+\infty.
\end{equation} 
\end{proposition}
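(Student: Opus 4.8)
The plan is to probe the boundedness inequality \eqref{FM_eqt:6} with one carefully chosen compactly supported bump, exploiting that the series \eqref{FM_eqt:4} represents $\sigma(D)$ as a superposition of translations along the dual lattice $\Lambda^\perp=P^{-T}\mathbb Z^d$.

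First I would fix the following data. Set $\rho:=\min\{\,\vert P^{-T}j\vert\ :\ j\in\mathbb Z^d_0\,\}$, which is strictly positive because $P^{-T}\mathbb Z^d$ is a full-rank lattice, and choose once and for all $\varphi\in C^\infty_0(\mathbb R^d)\setminus\{0\}$ with $\supp\varphi\subset\{\,\vert y\vert<\rho/3\,\}$. Then the translated supports $\supp(T_{-P^{-T}k}\varphi)\subset\{\,\vert y+P^{-T}k\vert<\rho/3\,\}$, $k\in\mathbb Z^d$, are pairwise disjoint. By \eqref{FM_eqt:4}, $\sigma(D)\varphi=\sum_{k}c_k(\sigma)\,T_{-P^{-T}k}\varphi$ with convergence in $\mathcal S^\prime(\mathbb R^d)$; since any test function supported in a fixed compact set meets only finitely many of these translated supports, pairing against test functions shows that $\sigma(D)\varphi$ is represented by the locally finite pointwise sum $g(x):=\sum_{k}c_k(\sigma)\,\varphi(x+P^{-T}k)$, at each point of which at most one summand does not vanish. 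This identification, for which the disjointness of the supports is essential, is the one slightly delicate point of the proof.

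Next I would compute the weighted norm of $g$. By the disjointness of the supports and Tonelli's theorem,
\[
\Vert\sigma(D)\varphi\Vert_{L^1_v}=\int_{\mathbb R^d}v(x)\vert g(x)\vert\,dx=\sum_{k\in\mathbb Z^d}\vert c_k(\sigma)\vert\int_{\mathbb R^d}v(x)\vert\varphi(x+P^{-T}k)\vert\,dx,
\]
and the change of variable $y=x+P^{-T}k$ turns the inner integral into $\int_{\mathbb R^d}v(y-P^{-T}k)\vert\varphi(y)\vert\,dy$. Writing $P^{-T}k=y+(P^{-T}k-y)$ and using the elementary submultiplicativity $v(z_1+z_2)\le 2^{s/2}v(z_1)v(z_2)$ (immediate from \eqref{pol}) together with $v(-z)=v(z)$, one obtains $v(P^{-T}k)\le 2^{s/2}v(y)\,v(y-P^{-T}k)$, hence $v(y-P^{-T}k)\ge 2^{-s/2}v(P^{-T}k)/v(y)$. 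Since $\varphi$ is compactly supported and not identically zero, $\kappa_0:=2^{-s/2}\int_{\mathbb R^d}\vert\varphi(y)\vert/v(y)\,dy$ is a finite strictly positive constant and $\int_{\mathbb R^d}v(y-P^{-T}k)\vert\varphi(y)\vert\,dy\ge\kappa_0\,v(P^{-T}k)$ for every $k\in\mathbb Z^d$.

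Combining the two computations with the hypothesis \eqref{FM_eqt:6} and with $\Vert\varphi\Vert_{L^1_v}<+\infty$ (true because $\varphi\in C^\infty_0$) yields
\[
\kappa_0\sum_{k\in\mathbb Z^d}v(P^{-T}k)\vert c_k(\sigma)\vert\le\Vert\sigma(D)\varphi\Vert_{L^1_v}\le C\,\Vert\varphi\Vert_{L^1_v}<+\infty,
\]
and dividing by $\kappa_0>0$ gives exactly \eqref{FM_eqt:7}. The argument is thus completely elementary once the support bookkeeping of the second paragraph is in place; the main obstacle is precisely to justify replacing the merely $\mathcal S^\prime$-convergent series \eqref{FM_eqt:4} by the honest locally integrable function $g$, and the choice of $\varphi$ with support inside one third of the packing radius of $\Lambda^\perp$ is exactly what makes both this step and the extraction of the $\ell^1$ sum from the integral work at once.
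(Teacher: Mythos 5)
Your proof is correct and follows essentially the same route as the paper: test the boundedness inequality on a single compactly supported bump whose lattice translates have (essentially) disjoint supports, split the $L^1_v$ norm of $\sigma(D)\varphi=\sum_k c_k(\sigma)T_{-P^{-T}k}\varphi$ over those supports, and use the submultiplicativity of $v$ to bound each piece below by a fixed constant times $v(P^{-T}k)\vert c_k(\sigma)\vert$. The only cosmetic difference is that the paper uses a function supported on the fundamental domain $P^{-T}([0,1]^d)$, whose translates tile $\mathbb R^d$ up to measure zero, whereas you shrink the support inside the packing radius to get strictly disjoint translates; your explicit justification of the identification of the $\mathcal S^\prime$-convergent series with a locally finite pointwise sum is a welcome clarification of a step the paper treats as evident.
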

\begin{proof}
%In view of the definition of the weighted space $L^1_v(\mathbb R^d)$, cf. Example \eqref{EXINV}, it is not restrictive to assume that $v$ is independent of $x$.
%\newline  
It is enough to evaluate $\sigma(D)$ on a non negative continuous function $\tilde u\in L^1_v(\mathbb R^d)$ supported on the compact set $\mathcal P_0:=P^{-T}([0,1]^d)$, such that $\Vert u\Vert_{L^1_v}=1$ \footnote{Such a function $u$ can be defined by $\tilde u(x):=\vert\mbox{det}P\vert\frac{\psi(P^Tx)}{v(x)}$, where $\psi$ is any non negative smooth function supported on the unit cube $Q=[0,1]^d$ such that $\int\psi(z)dz=1$.}. The function $T_{-P^Tk}\tilde u$ will be then supported on $\mathcal P_k:=\mathcal P_0-P^{-T}k$ for all $k\in\mathbb Z^d$. Since the set collection $\{\mathcal P_k\}_{k\in\mathbb Z^d}$ defines a covering on $\mathbb R^d$ such that $\mathcal P_k\cap\mathcal P_h$ has zero Lebesgue measure, whenever $k\neq h$, we get
\begin{equation}\label{FM_eqt:8}
\Vert\sigma(D)\tilde u\Vert_{L^1_v}=\sum\limits_{k\in\mathbb Z^d}\int_{\mathcal P_k}v(x)\vert\sigma(D)\tilde u(x)\vert dx\,.
\end{equation}
It is even clear that $\sigma(D)\tilde u$ reduces to $c_k(\sigma)T_{-P^{-T}k}\tilde u$ in the interior of the set $\mathcal P_k$, for each $k$, so that by change of integration variable and sub-multiplicativity of $v$, we get
\begin{equation}\label{FM_eqt:9}
\begin{split}
\Vert\sigma(D)\tilde u\Vert_{L^1_v}=\sum\limits_{k\in\mathbb Z^d}\vert c_k(\sigma)\vert\int_{\mathcal P_k}v(x)\vert\tilde u(x+P^{-T}k)\vert dx\\
=\sum\limits_{k\in\mathbb Z^d}\vert c_k(\sigma)\vert\int_{\mathcal P_0}v(y-P^{-T}k))\vert\tilde u(y)\vert dy\ge\frac1{K}\sum\limits_{k\in\mathbb Z^d}\vert c_k(\sigma)\vert v(P^{-T}k)\,,
\end{split}
\end{equation} 
with $K>0$ depending only on $P$ and $v$.
\end{proof}
\begin{remark}
Combining Proposition \ref{FM_prop:1} and Theorem \ref{TEOPST} we obtain that, in the case of Fourier multipliers, condition \eqref{FM_eqt:7} is actually equivalent to the continuity in any tfs invariant Banach space, as defined in \eqref{PST8};  here translation invariance of the space is enough, due to the Fourier multiplier structure \eqref{FM_eqt:4}.\\
Instead, condition \eqref{TEOPST1} is no longer necessary for continuity of periodic pseudodifferential operators, with $x$-dependent symbol, in tfs invariant spaces. It can be easily shown by taking a symbol of the following form $p(x,\omega)=\nu(x)\sigma(\omega)$ where $\nu$ is a function in $L^\infty({\mathbb T})$, such that the sequence of its Fourier coefficients $\{c_k(\nu)\}\notin \ell^1(\mathbb Z)$ and $\sigma(\omega)$ satisfies  \eqref{FM_eqt:7}. For instance we could take $\nu(x)=1$ for $0\le x< 1/2$, $\nu(x)=0$ for $1/2\le x < 1$, repeated by periodicity. 
It is straightforward to check that the pseudodifferential operator $p(x,D)$ maps continuously $L^p_v(\mathbb R)$ into itself, whenever $1\le p<+\infty$. On the  other hand we compute at once that
\begin{equation}\label{FM_eqt:10}
 \sum_{(h,k)\in \mathbb Z^{2}}v(k) \vert c_{(h,k)}(p)\vert=\sum_{h\in \mathbb Z}\vert c_h(\nu)\vert \sum_{k\in \mathbb Z}v(k)\vert c_k(\sigma)\vert=+\infty.
\end{equation}

\end{remark}

% Subsesction Invertibiliy
\section{Invertibility}\label{INVERT}
For the study of invertibility condition of pseudodifferential operators we will make use of the well known properties of the von Neumann series in
Banach algebras, see e.g. \cite{RUD1},  in the following version.
\begin{proposition}\label{ALGEBRA}
Consider $x\in\mathcal A$, where  $(\mathcal A, \Vert \cdot\Vert)$
is a Banach algebra on the field of complex numbers, with
multiplicative identity $e$. If there exists $c\in \mathbb
C\setminus\{0\}$ such that $\Vert e-cx\Vert<1$, then $x$ is invertible in
$\mathcal A$ and
\begin{equation}\label{ALGEBRAINV}
x^{-1}=c \sum_{n=0}^\infty (e-cx)^n.
\end{equation}
\end{proposition}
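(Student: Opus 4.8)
The plan is to reduce everything to the standard geometric (Neumann) series. Set $y:=e-cx$, so that by hypothesis $\Vert y\Vert<1$ and $cx=e-y$ (with the convention $y^0:=e$). The first step is to show that the series $\sum_{n=0}^{\infty}y^{n}$ converges in $\mathcal A$: by submultiplicativity of the norm one has $\Vert y^{n}\Vert\le\Vert y\Vert^{n}$, and since $\Vert y\Vert<1$ the numerical series $\sum_{n}\Vert y\Vert^{n}$ converges; hence the partial sums $s_{N}:=\sum_{n=0}^{N}y^{n}$ form a Cauchy sequence, which converges to some $s\in\mathcal A$ because $\mathcal A$ is complete.

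Next I would check that $s$ is a two-sided inverse of $e-y$. This rests on the telescoping identities
\begin{equation}
(e-y)s_{N}=s_{N}(e-y)=e-y^{N+1},
\end{equation}
valid in any ring with identity. Since $\Vert y^{N+1}\Vert\le\Vert y\Vert^{N+1}\to0$ and multiplication in a Banach algebra is (jointly) continuous, letting $N\to\infty$ gives $(e-y)s=s(e-y)=e$. Thus $cx=e-y$ is invertible with $(cx)^{-1}=s=\sum_{n=0}^{\infty}(e-cx)^{n}$.

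Finally, because $c\in\mathbb C\setminus\{0\}$, the element $x=c^{-1}(cx)$ is invertible, and $x^{-1}=c\,(cx)^{-1}=c\sum_{n=0}^{\infty}(e-cx)^{n}$, which is exactly \eqref{ALGEBRAINV}. There is no genuine obstacle in this argument: the only ingredients are submultiplicativity of the norm, completeness of $\mathcal A$, and continuity of the product. The single point worth spelling out is the telescoping identity, since it is what forces the limit $s$ to actually be the inverse of $e-cx$ rather than merely a plausible candidate.
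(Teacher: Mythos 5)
Your proof is correct and complete: the paper itself gives no proof of this proposition, merely citing it as the well-known von Neumann series result (referring to Rudin), and your argument via the geometric series $\sum_n (e-cx)^n$, the telescoping identity, and the final rescaling by $c$ is exactly the standard one being invoked. Nothing is missing.
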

\begin{theorem}\label{TEOINV}
Let $X$ be a tfs invariant  space, $L\in GL(2d)$, $p\in \mathcal D' (\mathbb T^{2d}_L)$. Assume that the Fourier coefficients $c_\kappa(p)$, $\kappa\in \mathbb Z^{2d}$, satisfy
\begin{equation}\label{TEOINV1}
c_0(p)\neq 0\quad\text{and}\quad
\sum_{\kappa\in \mathbb Z^{2d}_0}\vert c_\kappa(p)\vert v\left(\pi_{\mathcal J L^{-T}\kappa}\right) <\frac{\vert c_0(p)\vert}{C},
\end{equation} 
where $C$ is the constant  in \eqref{PST8}. Then for any $0\leq \tau\leq 1$ 
\begin{itemize}
\item [i)]
the operator $\textup{Op}_\tau(p)$ is invertible in $\mathcal L(X)$;
\item[ii)]
the  norm in $\mathcal L(X)$ of the inverse operator satisfies the following estimate
\begin{equation}\label{TEOINV2}
\Vert (\textup{Op}_\tau(p))^{-1}\Vert_{\mathcal L(X)}\leq \dfrac{1}{\left(1+Cv(0)\right)\vert c_0(p)\vert-C\Vert c_k(p)\Vert_{\ell^1_{L,m}}}.
\end{equation} 
\end{itemize}
\end{theorem}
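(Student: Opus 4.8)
The plan is to exhibit $\textup{Op}_\tau(p)$ as a nonzero scalar multiple of the identity plus a small perturbation, and then conclude via the Neumann series of Proposition~\ref{ALGEBRA} in the Banach algebra $\mathcal A=\mathcal L(X)$, with unit $e=\mathrm{Id}_X$ and scalar $c=1/c_0(p)$ (this makes sense since $c_0(p)\neq 0$ by \eqref{TEOINV1}).

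First I would observe that \eqref{TEOINV1} forces $\|c_\kappa(p)\|_{\ell^1_{L,v}}<+\infty$: it suffices to add the $\kappa=0$ term $|c_0(p)|\,v(0)$ to the finite sum over $\mathbb{Z}^{2d}_0$. Hence Theorem~\ref{TEOPST} gives $\textup{Op}_\tau(p)\in\mathcal L(X)$, and by Proposition~\ref{PROPPST} the series in \eqref{PST2} converges in the operator norm of $\mathcal L(X)$ (since $\sum_\kappa |c_\kappa(p)|\,\|\pi_{\mathcal JL^{-T}\kappa}\|_{\mathcal L(X)}\le C\|c_\kappa(p)\|_{\ell^1_{L,v}}<+\infty$) and represents $\textup{Op}_\tau(p)$ there. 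Separating the $\kappa=0$ summand — for which the exponential phase equals $1$ and $\pi_{\mathcal JL^{-T}\kappa}=\pi_0=\mathrm{Id}$ — I write $\textup{Op}_\tau(p)=c_0(p)\,\mathrm{Id}+R_\tau$, with $R_\tau:=\sum_{\kappa\in\mathbb{Z}^{2d}_0}c_\kappa(p)\,e^{2\pi i\tau\langle I_2L^{-T}\kappa,\,I_1L^{-T}\kappa\rangle}\,\pi_{\mathcal JL^{-T}\kappa}$.

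Then $e-c\,\textup{Op}_\tau(p)=-\tfrac{1}{c_0(p)}R_\tau$, and estimating its operator norm exactly as in \eqref{TEOPST3.1} — the unimodular phase factors being irrelevant to the norm, and \eqref{PST8} giving $\|\pi_{\mathcal JL^{-T}\kappa}\|_{\mathcal L(X)}\le C\,v(\mathcal JL^{-T}\kappa)$ — I obtain
\[
\|e-c\,\textup{Op}_\tau(p)\|_{\mathcal L(X)}\ \le\ \frac{C}{|c_0(p)|}\sum_{\kappa\in\mathbb{Z}^{2d}_0}|c_\kappa(p)|\,v(\mathcal JL^{-T}\kappa)\ =:\ \theta\ <\ 1,
\]
the last inequality being precisely \eqref{TEOINV1}. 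Proposition~\ref{ALGEBRA} then shows that $\textup{Op}_\tau(p)$ is invertible in $\mathcal L(X)$, which is (i), and that its inverse is given by \eqref{ALGEBRAINV}, so that $\|(\textup{Op}_\tau(p))^{-1}\|_{\mathcal L(X)}\le |c|\sum_{n\ge 0}\theta^{n}=\frac{1}{|c_0(p)|(1-\theta)}$.

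For (ii) it only remains to recast this last bound. Clearing the denominator gives $\frac{1}{|c_0(p)|(1-\theta)}=\frac{1}{|c_0(p)|-C\sum_{\kappa\in\mathbb{Z}^{2d}_0}|c_\kappa(p)|\,v(\mathcal JL^{-T}\kappa)}$, and adding and subtracting $C|c_0(p)|v(0)$ in the denominator, together with the identity $\|c_\kappa(p)\|_{\ell^1_{L,v}}=|c_0(p)|v(0)+\sum_{\kappa\in\mathbb{Z}^{2d}_0}|c_\kappa(p)|\,v(\mathcal JL^{-T}\kappa)$, turns it into the denominator $(1+Cv(0))|c_0(p)|-C\|c_\kappa(p)\|_{\ell^1_{L,v}}$ of \eqref{TEOINV2}; this quantity is positive exactly because $\theta<1$. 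I do not expect any genuine difficulty: the argument is a direct application of the Banach-algebra Neumann series, and the only points needing care are the separation of the $\kappa=0$ term (both to extract the scalar part of $\textup{Op}_\tau(p)$ and to re-express the final denominator) and the harmless remark that the unimodular exponentials in \eqref{PST2} do not alter operator norms.
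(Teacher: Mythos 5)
Your proof is correct and follows essentially the same route as the paper: isolate the $\kappa=0$ contribution so that $\textup{Op}_\tau(p)=c_0(p)\,\mathrm{Id}+R_\tau$, bound $\Vert \mathrm{Id}-\tfrac{1}{c_0(p)}\textup{Op}_\tau(p)\Vert_{\mathcal L(X)}$ by $\tfrac{C}{\vert c_0(p)\vert}\sum_{\kappa\neq 0}\vert c_\kappa(p)\vert v(\mathcal JL^{-T}\kappa)<1$, and conclude with the Neumann series of Proposition~\ref{ALGEBRA}. The only (immaterial) difference is that the paper performs the splitting at the symbol level, computing the Fourier coefficients of $1-p/c_0(p)$ and applying Theorem~\ref{TEOPST} to that symbol, whereas you separate the $\kappa=0$ term directly in the operator expansion of Proposition~\ref{PROPPST}; the final rewriting of the denominator via $\Vert c_\kappa(p)\Vert_{\ell^1_{L,v}}-\vert c_0(p)\vert v(0)$ matches the paper's as well.
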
 
Notice that, according to the previous estimate, the invertibility of $\textup{Op}_\tau(p)$ is independent of the quantization $\tau$.

\begin{proof}
Our goal is to estimate the operator norm $\Vert I-c\, \textup{Op}_\tau (p)\Vert _{\mathcal L(X)}$, for any $0\le\tau\le 1$, and $c$ suitable non vanishing constant. 
Let us consider the  definition of Fourier coefficient \eqref {FL6}. Since the \textit{monochromatic signals}  $e^{-2\pi i \langle L^{-T}\kappa, x\rangle}$  are $L$-periodic, it easily follows that $c_0(1)=1$ and $c_\kappa(1)=0$ when $\kappa\in\mathbb Z^{2d}_0$ . Thus $c_\kappa(1-c p)=-c c_\kappa(p)$, when $\kappa\neq 0$ and $c_0(1-c p)=1-c\,c_0(p)$. Assuming that $\langle p, 1\rangle _{\mathbb T_L^{2d}}\neq 0$, thus $c_0(p)= \frac{\langle p, 1\rangle_{\mathbb T^{2d}_L}}{\textup {det} L}\neq 0$,  and setting $c=\frac1{c_0(p)}$, the symbol of the operator $I- \frac{1}{c_0(p)}\textup{Op}_\tau(p)$ admits the following Fourier coefficients
\begin{equation}\label{INV1}
c_{\kappa}\left(1- \frac{p}{c_0(p)}\right)=
\left\{
\begin{array}{ll}
0 & \text {when}\,\,\kappa=0\\
-\frac{c_{\kappa}(p)}{c_0(p)} &\text {when}\,\,\kappa\neq 0\, .
\end{array}
\right.
\end{equation}
The following estimate then follows directly from Theorem \ref{TEOPST},
\begin{equation}\label{INV2}
\begin{array}{ll}
\Vert I- \frac{1}{c_0(p)}\textup{Op}_\tau(p)\Vert_{\mathcal L(X)} & \leq C\Vert  c_k(1- \frac{p}{c_0(p)})\Vert_{\ell^1_{L, m}}=\\
\\
& =\frac{C}{\vert c_0(p)\vert} \sum\limits_{\kappa\in \mathbb Z^{2d}_0} \vert c_\kappa (p)\vert v\left(\mathcal JL^{-T}\kappa\right),
\end{array}
\end{equation}
 where $C$ is the constant in \eqref{PST8}. Thus i) directly follows  from Proposition \ref{ALGEBRA} .\\
Thanks to the assumption \eqref{TEOINV1},  the estimate \eqref{INV2} and Proposition \ref{ALGEBRA},  the inverse operator $(\textup{Op}_\tau(p))^{-1}$ can be expanded in Neumann series
\begin{equation}\label{INV3}
(\textup{Op}_\tau(p))^{-1}= \frac{1}{c_0(p)}\sum_{n=0}^{+\infty}\left(I-\frac{1}{c_0(p)}\textup{Op}_\tau(p)\right)^n,
\end{equation}
then using again \eqref{INV2} we have
\begin{equation}
\begin{array}{ll}
\Vert (\textup{Op}_\tau(p))^{-1}\Vert_{\mathcal L(X)}&\le \frac{1}{\vert c_0(p)\vert}\sum\limits_{n=0}^{+\infty}\Vert I-\frac{1}{c_0(p)}\textup{Op}_\tau(p) \Vert_{\mathcal L(X)}^n\\
\\
&\le \frac{1}{\vert c_0(p)\vert}\sum\limits_{n=0}^{+\infty}\frac{C^n}{\vert c_0(p)\vert^n} \left(\sum\limits_{\kappa\in\mathbb Z^{2d}_0}\vert c_k(p)\vert v\left(\mathcal JL^{-T}\kappa\right)\right)^n\\
&\leq \frac{1}{\vert c_0(p)\vert}\sum\limits_{n=0}^{+\infty}\left(\frac{C}{\vert c_0(p)\vert}(\Vert c_\kappa(p)\Vert_{\ell^1_{L,m}}-\vert c_0(p)\vert v(0))\right)^n\\
\\
&\leq \frac{1}{\vert c_0(p)\vert}\dfrac{1}{1-\frac{C}{\vert c_0(p)\vert}\left(\Vert c_\kappa(p)\Vert_{\ell^1_{L,m}}-\vert c_0(p)\vert v(0)\right)}\\
\\
&=\dfrac{1}{\left(1+Cv(0)\right)\vert c_0(p)\vert-C\Vert c_k(p)\Vert_{\ell^1_{L,m}}},
\end{array} 
\end{equation}
which proves ii).
\end{proof}

%%%%%%%%%%%%%%%%%%%%%%%%%%%%%%%%%%%%%%%%%%%%%%%%%%%%%%%%%%%
\appendix
\section{On periodic distributions and distributions on the torus}\label{PER-DIST}
This section is devoted to shortly review some known facts about the comparison between the space of {\em periodic} distributions in $\mathbb R^n$, see Section \ref{SubPD}, and the space $\mathcal D^\prime(\mathbb T^n_L)$ of {\em distributions on the torus} $\mathbb T^n_L:=\mathbb R^n/L\mathbb Z^n$, in order to justify the identification of the aforementioned spaces, that we have implicitly assumed in the whole  paper. Recall that $\mathcal D^\prime(\mathbb T^n_L)$ is the space linear continuous forms on the function space $C^\infty(\mathbb T^n_L)$ (the latter being endowed with its natural Fr\'echet space topology). As already mentioned in Section \ref{SubPD}, the reader is referred to Ruzhansky - Turunen \cite{RuzTur} for a thorough study of distributions on the torus.
\newline
For the rest, the results collected herebelow come essentially from making  explicit some of the results established in H\"{o}rmander \cite[Section 7.2]{HOR0}.
\newline
It is well understood that functions on the torus $\mathbb T^n_L$ can be naturally identified with $L-$periodic functions in $\mathbb R^n$. Below, we will illustrate a way to extend the same identifications to all $L-$periodic distributions in $\mathbb R^n$. This extension to distributions can be made by a duality argument, as it is customary. Thought the following arguments work in the case of $L-$periodicity, with arbitrary invertible matrix $L\in GL(n)$, just for simplicity we will restrict to the case of $L=I_n$ the $n\times n$ identity matrix, leading to $1-$periodic functions and distributions.
\newline
So let us first consider a $1-$periodic measurable function $f=f(x)$ in $\mathbb R^n$ such that $f\in L^1([0,1]^n)$; of course, such a function $f$ is a $1-$periodic tempered distribution in $\mathbb R^n$. On the other hand, when  identified (as usual) with an integrable function on the torus $\mathbb T^n$, $f$ defines an element of $\mathcal D^\prime(\mathbb T^n)$, whose action  on test functions $\psi\in C^\infty(\mathbb T^n)$ is given by
\[
\langle f,\psi\rangle_{\mathbb T^n}:=\int_{[0,1]^n}f(x)\psi(x)dx\,;
\]
in order to avoid confusion, here and below $\langle\cdot,\cdot\rangle_{\mathbb T^n}$ stands for the duality pair between $\mathcal D^\prime(\mathbb T^n)$ and $\mathcal C^\infty(\mathbb T^n)$, whereas $\langle\cdot,\cdot\rangle$ is denoting the dual pair between $\mathcal S(\mathbb R^n)$ and $\mathcal S^\prime(\mathbb R^n)$.
\newline
Testing $f$ against an arbitrary function $\varphi\in\mathcal S(\mathbb R^n)$ we compute
\begin{equation}\label{APP0}
\begin{split}
\langle f,\varphi\rangle&=\int_{\mathbb R^n}f(x)\varphi(x)dx=\sum\limits_{\kappa\in\mathbb Z^n}\int_{[0,1]^n+\kappa}f(x)\varphi(x)dx\\
&=\sum\limits_{\kappa\in\mathbb Z^n}\int_{[0,1]^n}f(y+\kappa)\varphi(y+\kappa)dy=\int_{[0,1]^n}\sum\limits_{\kappa\in\mathbb Z^n}f(y+\kappa)\varphi(y+\kappa)dy\\&=\int_{[0,1]^n}f(y)\sum\limits_{\kappa\in\mathbb Z^n}\varphi(y+\kappa)dy\,,
\end{split}
\end{equation}
where the countable-additivity of the Lebesgue integral, together with Fubini's theorem to interchange the sum and the integral, and the periodicity of $f$ are used.
\newline
The function 
$$
\varphi_{\rm per}(x):=\sum\limits_{\kappa\in\mathbb Z^n}\varphi(x+\kappa)\,,\quad x\in\mathbb R^n\,,
$$ 
which appears in the last line in \eqref{APP0}, is a $1-$periodic $C^\infty$function that can be canonically identified with a (unique) element in $C^\infty(\mathbb T^n)$ ); we call it  $1-$\textit{periodization} of $\varphi$. 
It is fairly easy to check that the mapping $\varphi\mapsto\varphi_{\rm per}$ is continuous as a (linear) operator from $\mathcal S(\mathbb R^n)$ to $C^\infty(\mathbb T^n)$ (thus from $C^\infty_0(\mathbb R^n)$ to $C^\infty(\mathbb T^n)$, as well). Therefore the calculations above show naturally the way to define a linear mapping $\mathcal T$ from the space $\mathcal D^\prime(\mathbb T^n)$  to the subspace of $\mathcal D^\prime(\mathbb R^n)$ consisting of $1-$periodic distributions (which are  automatically  tempered distributions), just by setting for $U\in\mathcal D^\prime(\mathbb T^n)$
\begin{equation}\label{APP2}
\langle\mathcal T(U),\varphi\rangle:=\langle U,\varphi_{\rm per}\rangle_{\mathbb T^n}\,,\quad\forall\,\varphi\in\mathcal S(\mathbb R^n)\,.
\end{equation}
It is easy to verify that $\mathcal T$ acts continuously from $\mathcal D^\prime(\mathbb T^n)$ to $1-$periodic distributions in $\mathbb R^n$. Moreover  \eqref{APP0} shows that $\mathcal T(U)$ properly reduces to the $1-$periodic tempered distribution in $\mathbb R^n$ corresponding to a function $U\in L^1(\mathbb T^n)$. 
\newline
It is a little less obvious that $\mathcal T$ is invertible, so that it actually defines an isomorphism. This can be proved by noticing that every function $\psi\in C^\infty(\mathbb T^n)$ can be regarded as the $1-$periodization of (at least) one function $\varphi\in\mathcal S(\mathbb R^n)$, namely $\psi=\varphi_{\rm per}$. To see this, consider a function $\phi\in C^\infty_0(\mathbb R^n)$ satisfying \eqref{PHI} and set
\begin{equation}\label{APP3}
\varphi:=\phi\psi\,,
\end{equation}
for any function $\psi\in C^\infty(\mathbb T^n)$ (identified with its $1-$periodic $C^\infty-$counterpart in $\mathbb R^n$). Of course, $\varphi$ defined above belongs to $C^\infty_0(\mathbb R^n)$ so it is rapidly decreasing in $\mathbb R^n$; moreover, in view of \eqref{PHI} and the periodicity of $\psi$, we get for any $x\in\mathbb R^n$
\begin{equation*}
\varphi_{\rm per}(x)=\sum\limits_{\kappa\in\mathbb Z^n}\varphi(x+\kappa)=\sum\limits_{\kappa\in\mathbb Z^n}\phi(x+\kappa)\psi(x+\kappa)=\psi(x)\sum\limits_{\kappa\in\mathbb Z^n}\phi(x+\kappa)=\psi(x)\,,
\end{equation*}
showing that $\psi$ is actually the $1-$periodization of $\varphi$. This leads to associate to any periodic distribution $u\in\mathcal S^\prime(\mathbb R^n)$ a linear form $U$ on $C^\infty(\mathbb T^n)$ by setting for every $\psi\in C^\infty(\mathbb T^n)$
\begin{equation}\label{APP4}
\langle U,\psi\rangle_{\mathbb T^n}:=\langle u,\varphi\rangle\,,
\end{equation}  
being $\varphi=\varphi(x)$ the rapidly decreasing (actually compactly supported smooth) function in $\mathbb R^n$ associated to $\psi$ as in \eqref{APP3}. 
In order to give consistency to the definition of $U$, we must prove that it is independent of $\varphi$. Let us notice that, thanks to \eqref{HS2}, any function $\varphi\in\mathcal S(\mathbb R^n)$, whose $1-$periodization is given by $\psi\in C^\infty(\mathbb T^n)$, satisfies the following:
\[
\hat\varphi(\kappa)=c_\kappa(\psi)\,,\quad\forall\,\kappa\in\mathbb Z^n\,.
\]
Thus using the Fourier expansion \eqref{HS3} we obtain 
\[
\langle u,\varphi\rangle=\sum_{\kappa\in\mathbb Z^n} c_\kappa \langle e^{2\pi i \langle \cdot, \kappa\rangle},\varphi\rangle=\sum_{\kappa\in\mathbb Z^n} c_\kappa\hat\varphi(\kappa)=\sum_{\kappa\in\mathbb Z^n} c_\kappa c_\kappa(\psi)\,.
\]
This shows the consistency of the definition of $U$ above; the continuity of the linear form $U$ on $C^\infty(\mathbb T^n)$ also easily follows, so that $U\in\mathcal D^\prime(\mathbb T^n)$. 
\newline
The mapping $u\mapsto U$ defined on $1-$periodic distributions in $\mathbb R^n$ by \eqref{APP4} provides a linear continuous operator from the space of  
$1-$periodic (tempered) distributions in $\mathbb R^n$ to $\mathcal D^\prime(\mathbb T^n)$, which is actually the inverse of the operator $\mathcal T$ introduced before, see \eqref{APP2}. This equivalently shows that $\mathcal T$ is an isomorphism, up to which $1-$periodic distributions in $\mathbb R^n$ can be thought to be elements of $\mathcal D^\prime(\mathbb T^n)$ and viceversa.
\newline
Thus a $1-$periodic distribution $u\in\mathcal S^\prime(\mathbb R^n)$ can be regarded as a linear continuous form on $C^\infty(\mathbb T^n)$; in particular this makes rigorous the testing of $u$ against $1-$periodic smooth functions in $C^\infty(\mathbb T^n)$, such as $e^{-2\pi i\langle\kappa,\cdot\rangle}$, with $\kappa\in\mathbb Z^n$, providing meanwhile an explicit explanation of formula \eqref{HS4} for the {\em Fourier coefficients} of $u$.

\section*{Acknowledgments}
The research of A. Morando is partially supported by the Italian MUR Project PRIN prot. 20204NT8W4.\\
 G. Garello is supported by the Local Research Grant of University of Torino.

%%%%%%%%%%%Bibliography


\begin{thebibliography}{9999}

\bibitem{BogDedOli2010}{P.~Boggiatto, G.~De Donno and A.~Oliaro}, \emph{Time-Frequency representations of Wigner type and
pseudo-differential operators}, Trans. Amer. Math. Soc. \textbf{362
n.9}, (2010), 4955--4981.

\bibitem{BogGar2020}{P.~Boggiatto, G.~Garello}, \emph{Pseudo-differential operators and existence of Gabor frames}, J. Pseudo-Differ. Oper. Appl. \textbf{11} (2020), 93-117.

\bibitem {BoKiRu20} {L. N. A. Botchway, P. G. Kibiti, M. Ruzhansky}, \emph {Difference equations and pseudo-differential operators on $\mathbb Z^n$}, J. Funct. Anal. \textbf{278 (11)}, (2020), 108473, 41.

\bibitem{CaTo17}{M. Cappiello, J. Toft}, \emph{ Pseudo-differential operators in a Gelfand-Shilov setting}, Math.
Nachr.  \textbf{290(5-6)}, (2017), 738–755 .

\bibitem{Chr16}{O. Christensen}, \emph{An Introduction to Frames and Riesz
Bases}, {Birk\"auser}, Boston MA 2016.

\bibitem{Coh95-1}{L.~Cohen}, \emph{Time-Frequency Analysis}, {Prentice Hall Signal Proc. series}, New Jersey, 1995.

\bibitem{CorDeTra19} {E. Cordero, L. D’Elia,  S. I. Trapasso}, \emph{Norm estimates for $\tau$-pseudodifferential operators in
Wiener amalgam and modulation spaces}, J. Math. Anal. Appl. \textbf{471(1-2)}, (2019), 541–563.

\bibitem{CorNic10} {E. Cordero, F. Nicola}, \emph{Pseudodifferential operators on $L^p$, Wiener amalgam and modulation spaces}, Int. Math. Res. Not.  \textbf{(10)}, (2010), 1860–1893.

\bibitem{CorNicTra19}{E. Cordero, F. Nicola, S. I. Trapasso}, \emph{Almost diagonalization of $\tau$-pseudodifferential
operators with symbols in Wiener amalgam and modulation spaces}, J. Fourier Anal. Appl. \textbf{25(4)}, (2019), 1927–1957.


\bibitem {Dai-Sun16}{X.R. Dai, Q. Sun}, \emph{The abc-problem for Gabor Systems}, Mem. Amer. Math. Soc. \textbf{244 no. 1152} (2016), ix-99.

\bibitem{DeTra18}{L. D’Elia, S. I. Trapasso} \emph{Boundedness of pseudodifferential operators with symbols in
Wiener amalgam spaces on modulation spaces}, J. Pseudo-Differ. Oper. Appl. \textbf{ 9(4)}, (2018), 881–890.
\bibitem{deL65} {K. de Leeuw}, \emph{On $L_p$ multipliers}, Ann. of Math. \textbf{81} (1965), 364-479.

\bibitem{GRO1} {K. Gr\"ochenig}, \emph{Foundations of Time-Frequency Analysis}, {Birk\"auser}, Boston 2001.

\bibitem{GroKop19}{K. Gr\"ochenig, S. Koppensteiner}, \emph{Gabor frames: characterizations and coarse structure},\textit{New trends in applied harmonic analysis. Vol. 2---harmonic analysis, geometric measure theory, and applications}, Appl. Numer. Harmon. Anal. , Birkh\"auser/Springer, Cham, 2019, 93-120.

\bibitem{Hei11}{C. Heil}, \emph{A Basis Theory Primer},
{Birkh\"auser}, Boston, 2011.

\bibitem {HOR0} {L. H\"ormander}, \emph{The analysis of linear partial differential operators. I, volume 256 of
Grundlehren der Mathematischen Wissenschaften [Fundamental Principles of Mathematical Sciences]}. Springer-Verlag, Berlin, 1990. 

\bibitem{Hor94}{L. H\"ormander}, \emph{The analysis of linear partial differential operators. III, volume 274 of
Grundlehren der Mathematischen Wissenschaften [Fundamental Principles of Mathematical Sciences]}. Springer-Verlag, Berlin, 1994. 

\bibitem{HW1} {I. L. Hwang}, \emph{The $L^2$ boundedness of pseudodifferential operators}, Trans. Amer. Math. Soc. \textbf{302 n.1}(1987), 55-76 .

\bibitem{KumMon2023}{V. Kumar, S. S. Mondal}, {Symbolic calculus and $M$-ellipticity of pseudo-differential operators on $\mathbb Z^n$}, arXiv:2111.10224.

\bibitem{Iga69}{S. Igari}, \emph{Functions of $L^p$-multipliers}, Tohoku Math. J. \textbf{21 (2)}, (1969), 304-320.

\bibitem{Lab01}{D. Labate}, \emph{Pseudodifferential operators on modulation spaces}, J. Math
An. Appl. \textbf{262 n.1} (2001), 242-255.

\bibitem{PTT2} S.\ Pilipovi\'c, N.\ Teofanov, J.\ Toft, {\it Micro-local analysis with Fourier Lebesgue spaces. Part II}, J. Pseud-Differ. Oper. Appl.  {\bf 1 (3)} (2010), 341-376.

\bibitem{PTT1} S.\ Pilipovi\'c, N.\ Teofanov, J.\ Toft, {\it Micro-local analysis with Fourier Lebesgue spaces. Part I}, J. Fourier Anal. Appl., {\bf 17 (3)} (2011), 374--407. 

\bibitem{RUD1} W.~Rudin. \emph{Real and complex analysis}, third edition, {McGraw-Hill Book Co., New York}, 1987. 

\bibitem{RuzTur}{M. Ruzhansky, V. Turunen} \emph{Pseudo-differential Operators and Symmetries}, {Birk\"auser-Verlag, Basel}, 2010.

\bibitem {Shu87}{M.A. Shubin}, \emph{Pseudodifferential Operators and Spectral Theory}, {Springer-Verlag, Berlin}, 1987.

\bibitem{Tof04-1}{J.~Toft}, \emph{Continuity properties for modulation spaces with applications to pseudo-differential calculus I.} , J. Funct. Anal. \textbf{207
(2)}, (2004), 399--429.

\bibitem{Tof04-2}{J.~Toft}, \emph{Continuity properties for modulation spaces with applications to pseudo-differential calculus II.}, Ann. Global Anal. Geom.
\textbf{26}, (2004), 73--106.





\end{thebibliography}
\end{document}